\theoremstyle{plain}
\newtheorem{teo}{Theorem}[section]
\newtheorem{thm}[teo]{Theorem}
\newtheorem*{thm*}{Theorem}
\newtheorem{cor}[teo]{Corollary}
\newtheorem{lem}[teo]{Lemma}
\theoremstyle{definition}
\newtheorem{exa}[teo]{Example}
\newtheorem{rmk}[teo]{Remark}
\DeclareMathOperator{\diam}{diam}
\DeclareMathOperator{\dist}{dist}
\DeclareMathOperator{\card}{card}
\DeclareMathOperator{\homeos}{{\mathcal H}}
\DeclareMathOperator{\clos}{clos}
\newcommand{\azul}[1]{\textcolor{black}{#1}}
\newcommand{\expc}{\eta}
\newcommand{\R}    {\mathbb R}
\newcommand{\Z}  {\mathbb Z}
\newcommand{\gen}[1]{{\left<#1\right>}}
\renewcommand{\epsilon}{\varepsilon}
\author{Alfonso Artigue}
\title{Separating Homeomorphisms}
\date{\today}
\begin{document}

\maketitle

\begin{abstract}
We show that on a totally disconnected compact metric space every separating homeomorphisms is expansive except at periodic points.
We conclude that minimal separating homeomorphisms are expansive
and that every separating homeomorphism has asymptotic points.
We show that the only spaces admitting separating (or finite expansive) and recurrent homeomorphisms are finite sets. 
We apply our results to give a characterization of expansivity in terms 
of the expansivity of the cyclic group of powers of the homeomorphism.
\end{abstract}

\setcounter{tocdepth}{3}

\section{Introduction}

On a compact metric space $(X,\dist)$ consider a homeomorphism $f\colon X\to X$.
We say that $f$ is \emph{separating} if there is $\expc>0$ such that 
$\dist(f^n(x),f^n(y))\leq\expc$ for all $n\in\Z$ implies that $y=f^k(x)$ for some integer $k$.
This kind of dynamics was considered in \cites{Wine82,Wine85} 
for the study of the extensions of expansive homeomorphisms (see \S \ref{secSepHomeo} for the definition of expansive homeomorphism and some variations).
Separating homeomorphisms are also related to expansive flows in the following way. 
Let $\phi\colon\R\times X\to X$ be a flow (a continuous action of $\R$). 
A flow is \emph{kinematic expansive} \cite{ArKinExp} if for all $\expc>0$ there is $\delta>0$ such that 
if $\dist(\phi_t(x),\phi_t(y))\leq\expc$ for all $t\in\R$ then 
$y=\phi_s(x)$ for some $s\in (-\delta,\delta)$. 
In particular, if
$\dist(\phi_t(x),\phi_t(y))\leq\expc$ for all $t\in\R$ then 
$x$ and $y$ are in the same orbit, as for separating homeomorphisms (changing $\R$ by $\Z$).

In this paper we establish some links between separating and expansive homeomorphisms.
Our main result is Theorem \ref{teoSepFin}
where we show that if $f$ is separating and $X$ is totally disconnected 
then there is $\epsilon>0$ such that if $\dist(f^n(x),f^n(y))\leq\epsilon$ for all $n\in\Z$ then 
$x$ and $y$ are in a common periodic orbit.
In Example \ref{sepnoexp} we give a separating homeomorphism on a countable compact metric space that is not $N$-expansive. 
The example illustrates Theorem \ref{teoSepFin} 
and shows that the powers of a separating homeomorphism may not be separating,  
see Remark \ref{rmkPowers}.
In Corollaries \ref{corMin} and \ref{corSepAsymp} we conclude that minimal separating homeomorphisms are expansive
and that separating homeomorphisms have asymptotic points.
In Theorem \ref{teosepRecFin} we show that if $f$ is separating (or finite expansive) and recurrent then $X$ is a finite set. 
In Corollary \ref{corCyc}
we apply our results to give a characterization of expansivity in terms 
of the expansivity of the cyclic group of powers of the homeomorphism.

\section{Separating homeomorphisms}
\label{secSepHomeo}
Let $(X,\dist)$ be a compact metric space. 
A homeomorphism $f\colon X\to X$ is \emph{expansive} 
if there is $\expc>0$ such that if $x\neq y$ then $\dist(f^n(x),f^n(y))>\expc$ 
for some $n\in \Z$. In this case $\expc$ is an \emph{expansivity constant} for $f$. 
The \emph{orbit} of $x\in X$ is the set $$O(x)=\{f^n(x):n\in\Z\}.$$ 
As we said, $f$ is \emph{separating} \cites{Wine82,Wine85} if there is $\expc>0$ such that 
if $y\notin O(x)$ then $\dist(f^n(x),f^n(y))>\expc$ for some $n\in\Z$. 
In this case $\expc$ is a \emph{separating constant}.
The following result was proved by Wine and gives a fundamental link between expansive 
and separating homeomorphisms.

\begin{teo}[\cite{Wine82}]
 A separating homeomorphism $f$ is expansive if and only if 
 there is $\expc>0$ such that 
  if $x\in X$, $n\in \Z$ and $f^n(x)\neq x$ then there is 
  $r\in\Z$ such that $\dist(f^{r+n}(x),f^r(x))>\expc$.
\end{teo}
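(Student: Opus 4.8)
The plan is to prove the two implications separately, observing that the extra condition is exactly the ``intra-orbit'' part of expansivity that the separating hypothesis does not by itself supply. For the forward implication I would simply take the expansivity constant $\expc$ itself and unwind the definition: if $f^n(x)\neq x$, apply expansivity to the pair $x\neq y:=f^n(x)$ to obtain $r\in\Z$ with $\dist(f^r(x),f^r(y))>\expc$; since $f^r(y)=f^{r+n}(x)$ this is precisely the asserted inequality. No further work is needed in this direction, since the condition is a restriction of expansivity to pairs lying on a common orbit.

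The substance is in the converse. Suppose $f$ is separating with constant $\expc_1$ and that the stated condition holds with constant $\expc_2$. I would set $\expc=\min\{\expc_1,\expc_2\}$ and verify that this is an expansivity constant. Given $x\neq y$, I split into two cases according to whether $y$ lies on the orbit of $x$. If $y\notin O(x)$, the separating property directly produces an index at which the two orbits are more than $\expc_1\geq\expc$ apart. If instead $y\in O(x)$, write $y=f^n(x)$; since $y\neq x$ we have $f^n(x)\neq x$, so the hypothesis yields $r$ with $\dist(f^{r+n}(x),f^r(x))=\dist(f^r(y),f^r(x))>\expc_2\geq\expc$. In either case distinct points are separated beyond $\expc$, which is expansivity.

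The argument is elementary, and I do not anticipate a genuine obstacle: the content of the theorem is exactly the dichotomy that a failure of expansivity can arise in only two mutually exclusive ways, namely between points of different orbits (ruled out by the separating constant) or between distinct points of a single orbit (ruled out by the new condition). The only point requiring care is the bookkeeping with the strict inequalities when passing to the minimum of the two constants, which is why I would fix $\expc=\min\{\expc_1,\expc_2\}$ at the outset and track that each case delivers a gap strictly exceeding this common value.
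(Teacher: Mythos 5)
Your proof is correct and complete: the forward direction is just expansivity restricted to pairs $x$, $f^n(x)$ on a common orbit, and the converse splits $x\neq y$ into $y\notin O(x)$ (handled by the separating constant) and $y=f^n(x)\neq x$ (handled by the hypothesis), with $\expc=\min\{\expc_1,\expc_2\}$ serving as an expansivity constant. Note that the paper itself gives no proof of this statement---it is quoted from Wine's 1982 article---so there is nothing to compare against in the source; your argument is the natural self-contained one and fills that gap correctly.
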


Let us introduce some forms of expansivity from the references \cites{CarCor,Mo12,MoSi,ArCa,Ka93}
that will be used throughout the paper.
Given $x\in X$ and $\expc>0$ define 
\[
 \Gamma_\expc(x)=\{y\in X:\dist(f^k(x),f^k(y))\leq\epsilon\text{ for all }k\in\Z\}.
\]
We say that the homeomorphism $f$ is: 
\begin{itemize}
\item $N$-\emph{expansive}       if $\exists$ $\expc>0$ s.t. $\card(\Gamma_\expc(x))\leq N,\forall x\in X$, 
\item \emph{finite expansive}    if $\exists$ $\expc>0$ s.t. $\card(\Gamma_\expc(x))< \infty, \forall x\in X$,
\item \emph{countably expansive} if $\exists$ $\expc>0$ s.t. $\card(\Gamma_\expc(x))\leq\card(\Z), \forall x\in X$,
\item \emph{cw-expansive}        if $\exists$ $\expc>0$ s.t. $\Gamma_\expc(x)$ is totally disconnected $\forall x\in X$.
\end{itemize}
In each case, we say that $\expc$ is a constant of the corresponding form of expansivity.
Table \ref{tablaExp} summarizes the variations of expansivity that we are considering.
\begin{table}[h]
\[
\begin{array}{ccccccc}
\text{exp} & \rightarrow & \text{separating} \\
\downarrow &&& \searrow\\
N\text{-exp} & \rightarrow & \text{finite exp} 
& \rightarrow &  \text{countably exp}
& \rightarrow &  \text{cw-exp}
\end{array}
\]
\caption{Hierarchy of expansivity on arbitrary metric spaces.}
\label{tablaExp}
\end{table}

The implications indicated by the arrows in Table \ref{tablaExp} are direct from the definitions. 
For example, every separating homeomorphism is countably expansive because, as $\Z$ is countable, 
every orbit is countable. 
Also, countable expansivity implies cw-expansivity because every non-trivial connected set is uncountable.

\begin{lem}
 \label{lemFinExpGamma}
 If $f$ is a homeomorphism of a compact metric space then:
 \begin{enumerate}
  \item $f$ is finite expansive if and only if there is $\epsilon>0$ such that every $x\in X$ is an isolated point of $\Gamma_\epsilon(x)$,
  \item $f$ is separating if and only if there is $\epsilon>0$ such that $\Gamma_\epsilon(x)\subset O(x)$ for all $x\in X$.
 \end{enumerate}
\end{lem}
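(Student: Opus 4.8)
The plan is to treat the two parts separately, noting that part (2) is essentially a reformulation of the definition while part (1) requires a genuine (if short) argument in one direction. For part (2), I would simply unwind the definition of separating: $f$ is separating with constant $\epsilon$ means that $y\notin O(x)$ forces $\dist(f^n(x),f^n(y))>\epsilon$ for some $n\in\Z$, and the contrapositive of this statement is exactly ``$\dist(f^n(x),f^n(y))\leq\epsilon$ for all $n\in\Z$ implies $y\in O(x)$,'' i.e.\ $\Gamma_\epsilon(x)\subset O(x)$. Since this manipulation is reversible, the two conditions are literally the same statement written with the notation $\Gamma_\epsilon$, so part (2) follows with no further work.

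For part (1), the forward implication is immediate: if $f$ is finite expansive with constant $\epsilon$, then each $\Gamma_\epsilon(x)$ is a finite subset of a metric space, hence discrete, so in particular $x$ is an isolated point of $\Gamma_\epsilon(x)$ with the same constant. The substance is the converse. First I would record the elementary but essential observation that $\Gamma_\epsilon(x)=\bigcap_{k\in\Z}\{y:\dist(f^k(y),f^k(x))\leq\epsilon\}$ is an intersection of closed sets (each $f^k$ being continuous), hence closed, hence compact as a subset of the compact space $X$.

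To prove the converse, assume that there is $\eta>0$ such that every $x$ is isolated in $\Gamma_\eta(x)$, and I would claim that $\epsilon=\eta/2$ is a finite-expansivity constant. Suppose not: then some $\Gamma_{\eta/2}(x)$ is infinite. Being compact and infinite, it contains a sequence $y_n\to w$ with $y_n,w\in\Gamma_{\eta/2}(x)$ and $y_n\neq w$, where $w$ is an accumulation point lying in the set. The key step is then the constant-doubling estimate via the triangle inequality: for each $k\in\Z$, $\dist(f^k(y_n),f^k(w))\leq \dist(f^k(y_n),f^k(x))+\dist(f^k(x),f^k(w))\leq \eta/2+\eta/2=\eta$, so that $y_n\in\Gamma_\eta(w)$. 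Since the $y_n$ are distinct from $w$ and converge to $w$, the point $w$ fails to be isolated in $\Gamma_\eta(w)$, contradicting the hypothesis. Hence every $\Gamma_{\eta/2}(x)$ is finite and $f$ is finite expansive.

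The step I expect to be the crux is this passage from an accumulation point of $\Gamma_{\eta/2}(x)$ to a non-isolated point of $\Gamma_\eta(w)$: the honest content is that uniform isolation is being compared at two different scales, and the triangle inequality forces the doubling of the constant (which is why the two constants in the statement need not coincide). Everything else—closedness and compactness of $\Gamma$-sets, discreteness of finite sets, and the definitional rephrasing in part (2)—is routine.
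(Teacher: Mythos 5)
Your proof is correct and takes essentially the same route as the paper: the paper also treats part (2) as a direct rephrasing of the definition, and for part (1) it halves the constant and uses the inclusion $\Gamma_{\epsilon/2}(y)\subset\Gamma_\epsilon(x)$ for an accumulation point $x\in\Gamma_{\epsilon/2}(y)$, which is exactly the triangle-inequality doubling step you spell out. Your version merely makes explicit the closedness of the $\Gamma$-sets and the sequence extraction that the paper leaves implicit.
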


\begin{proof}
If $\expc$ is a constant of finite-expansivity then $\Gamma_{\expc}(x)$ is finite for every $x\in X$ 
and $x$ is isolated in $\Gamma_{\expc}(x)$. 
Conversely, suppose that each $x\in X$ is isolated in $\Gamma_\epsilon(x)$.
We will show that $\epsilon/2$ is a constant of finite expansivity.
If $\card(\Gamma_{\epsilon/2}(y))=\infty$ for some $y\in X$,
then there is an accumulation point $x\in\Gamma_{\epsilon/2}(y)$. 
Since $\Gamma_{\epsilon/2}(y)\subset \Gamma_\epsilon(x)$ we have a contradiction because $x$ is not isolated in $\Gamma_\epsilon(x)$.
Then, each $\Gamma_{\epsilon/2}(y)$ is finite.

The second part is direct from the definitions.
\end{proof}

For the proofs of Theorems \ref{teoSepFin} and \ref{teosepRecFin} we introduce the following equivalence relation.
Given $\epsilon>0$ we say that $x,y\in X$ are $\epsilon$-\emph{related} 
 if there is a finite sequence $z_1,z_2,\dots,z_n\in X$ such that 
 $z_1=x$, $z_n=y$ and $\dist(z_k,z_{k+1})<\epsilon$ for all $k=1,2,\dots, n-1$.
 The class of a point $x$ will be denoted by $[x]_\epsilon$. 

 \begin{rmk}
 \label{rmkqcomptotdisc}
If $X$ is totally disconnected then for all $\expc>0$ 
 there is $\epsilon>0$ such that $\diam([x]_\epsilon)<\expc$ for all 
 $x\in X$.  
 \end{rmk}

\begin{teo}
\label{teoSepFin}
If $f\colon X\to X$ is a separating homeomorphism of a totally disconnected 
compact metric space
then $f$ is finite expansive and, moreover, 
there is $\epsilon>0$ such that if $\Gamma_\epsilon(x)\neq\{x\}$ then $x$ is periodic. 
\end{teo}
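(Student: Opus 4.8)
The plan is to prove the ``moreover'' clause first, since finite expansivity then comes for free: once we have an $\epsilon$ for which $\Gamma_\epsilon(x)\neq\{x\}$ forces $x$ to be periodic, every $\Gamma_\epsilon(x)$ is finite (it equals $\{x\}$ when $x$ is not periodic, while for periodic $x$ it sits inside $\Gamma_\eta(x)\subset O(x)$, a finite set). So I fix a separating constant $\eta$, giving $\Gamma_\eta(x)\subset O(x)$ for all $x$ by Lemma~\ref{lemFinExpGamma}(2). Using Remark~\ref{rmkqcomptotdisc} I choose $\tau\in(0,\eta]$ with $\diam([x]_\tau)<\eta$ for every $x$; note that the $\tau$-classes are clopen and, by compactness, finite in number. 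I set $\epsilon=\tau/2$ and assume $\Gamma_\epsilon(x)\neq\{x\}$, choosing $y\in\Gamma_\epsilon(x)$ with $y\neq x$. Since $\epsilon\le\eta$ we get $y\in O(x)$, say $y=f^k(x)$ with $k\neq0$; as the bound $\dist(f^nx,f^{n+k}x)\le\epsilon$ is symmetric after reindexing $n\mapsto n-k$, I may assume $k\ge1$. Write $g=f^k$.

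The heart of the argument is a ``rotating small pieces'' construction. From $y\in\Gamma_\epsilon(x)$ we have $\dist(f^nx,f^{n+k}x)\le\epsilon<\tau$ for all $n$, so for each fixed residue $j$ the set $\{f^{j+mk}(x):m\in\Z\}$ lies in a single $\tau$-class $D_j$, since consecutive members are directly $\tau$-linked. Put $L_j=\overline{\{f^{j+mk}(x):m\in\Z\}}\subset D_j$, so that $\diam(L_j)<\eta$, and observe that $f$ maps $L_j$ onto $L_{(j+1)\bmod k}$ while $g$ leaves each $L_j$ invariant; hence $f^n(L_j)=L_{(j+n)\bmod k}$ has diameter $<\eta$ for every $n\in\Z$. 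Therefore, if $p,q$ lie in the same piece $L_j$, then $\dist(f^np,f^nq)\le\diam(L_{(j+n)\bmod k})<\eta$ for all $n$, that is $q\in\Gamma_\eta(p)$, and the separating property forces $q\in O(p)$. Taking $p=x$ yields $L_0\subset O(x)$; in particular $L_0$ is countable.

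To finish I use a minimal set. The set $L_0$ is compact and $g$-invariant, so $g|_{L_0}$ admits a nonempty minimal set $M\subset L_0$. Being a subset of $O(x)$, $M$ is countable; but an infinite minimal set has no isolated points and hence, being a nonempty perfect compact metric space, is uncountable. Thus $M$ is finite, i.e.\ a periodic orbit of $g$, and any $z\in M$ satisfies $g^N(z)=z$ for some $N\ge1$. Since $z\in L_0\subset O(x)$ we may write $z=f^a(x)$, and then $f^{Nk}(z)=z$ gives $f^{Nk}(x)=x$, so $x$ is periodic, as required. I expect the rotating-pieces step to be the crux: it is precisely there that total disconnectedness (small classes) combines with the $k$-periodicity of the class itinerary to upgrade ``same piece'' into ``full $\eta$-shadowing'', and hence, via separating, into ``same orbit''. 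The supporting facts (clopenness and finiteness of the $\tau$-classes, and uncountability of infinite minimal sets) are standard.
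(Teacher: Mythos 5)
Your proof is correct, but it takes a genuinely different route from the paper's. The paper argues in the opposite order: it first proves finite expansivity by contradiction, building the iterated saturations $A_0=\Gamma_\epsilon(x)$, $A_n=\bigcup_{y\in A_{n-1}}\Gamma_\epsilon(y)$ and their closure $A_*$ inside a single $\epsilon$-class, and showing that $A_*$ is perfect yet contained (by the separating property) in the countable set $O(x)$; only afterwards does it deduce the periodicity clause, using the set $B=\{f^{nk}(x):n\in\Z\}$ (your $L_0$ without the closure) and quoting the already-proved finiteness. You instead prove periodicity first and directly: the rotating pieces $L_j$ keep all iterates of diameter $<\eta$, so $L_0\subset O(x)$ by separation, and a minimal subset of $(L_0,f^k)$ must be finite because an infinite compact minimal set is perfect, hence uncountable; finite expansivity then drops out in one line, since $\Gamma_\epsilon(x)\subset\Gamma_\eta(x)\subset O(x)$ is finite whenever $x$ is periodic. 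The two arguments share the same pillars---small chain-classes coming from total disconnectedness (Remark \ref{rmkqcomptotdisc}), and a countable-versus-uncountable clash enabled by the separating property---but your cardinality argument is packaged in the classical ``infinite compact minimal sets are uncountable'' lemma rather than in an ad hoc perfect-set construction. One concrete advantage of your ordering: the paper's final step needs finiteness of a set that is only known to lie in $\Gamma_\eta(x)$, while its first part yields finite expansivity only at the smaller constant $\epsilon$; that mismatch is repairable (e.g.\ by choosing the constant for the second part via a further application of the Remark), but your minimal-set argument never encounters it. What the paper's approach buys in exchange is a proof of finite expansivity that is entirely independent of any analysis of periodic points, by exhibiting how a single non-isolated point of $\Gamma_\epsilon(x)$ would propagate to uncountably many companions.
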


\begin{proof}
Let $\expc$ be a separating constant and take $\epsilon\in (0,\expc)$ from Remark \ref{rmkqcomptotdisc} such that $\diam([u]_\epsilon)<\expc$ for all $u\in X$. 
Arguing by contradiction, suppose that $f$ is not finite expansive. 
By Lemma \ref{lemFinExpGamma} there is $x\in X$ such that $\Gamma_\epsilon(x)$ accumulates in $x$. 
Consider the following subsets 
\[
 \left\{
 \begin{array}{ll}
A_0   & =\Gamma_\epsilon(x),\\ 
A_{n} & =\bigcup_{y\in A_{n-1}}\Gamma_\epsilon(y) \text{ if } n\geq 1.  
 \end{array}
 \right.
\]
Note that $\Gamma_\epsilon(y)\subset B_\epsilon(y)\subset [y]_\epsilon$ for all $y\in X$. 
In particular we have that $A_0\subset [x]_\epsilon$.
Suppose that $A_{n-1}\subset [x]_\epsilon$ and take $y\in A_{n-1}$. 
Since $\Gamma_\epsilon(y)\subset [y]_\epsilon$, and $[y]_\epsilon$ is a class of an equivalence relation, 
we conclude that $A_n\subset [x]_\epsilon$.
By induction we have that $A_n\subset [x]_\epsilon$ for all $n\geq 0$. 

Define $$A_*=\clos(\cup_{n\geq 0} A_n).$$
Given that each class $[x]_\epsilon$ is a closed subset, we conclude that $A_*\subset [x]_\epsilon$.
Note that $f^n(\Gamma_\epsilon(u))=\Gamma_\epsilon(f^n(u))$ for all $u\in X$ and all $n\in\Z$. Thus
$f^n(A_*)\subset [f^n(x)]_\epsilon$ and 
$\diam(f^n(A_*))\leq\expc$ for all $n\in\Z$.
As $\expc$ is a separating constant for $f$, for all $z\in A_n$ there is $k\in\Z$ such that $z=f^k(x)$. 
Then, $\Gamma_\epsilon(z)=f^k(\Gamma_\epsilon(x))$ accumulates in $z$, 
because $\Gamma_\epsilon(x)$ accumulates in $x$ and $f$ is a homeomorphism. 
Then, $A_*$ has no isolated points.
As the set $A_*$ is closed we conclude that $A_*$ is uncountable. 
Since $\diam(f^n(A_*))\leq\expc$ for all $n\in\Z$ we have that $\expc$ is not a constant of 
countable expansivity for $f$.
As $\expc$ is an arbitrary separating constant, we conclude that $f$ is not countably expansive. 
This contradicts that $f$ is separating (recall Table \ref{tablaExp}). Then, $f$ is finite expansive.

As before, consider $\epsilon$ from Remark \ref{rmkqcomptotdisc}.
 Take $x,y\in X$ such that 
 $x\neq y$ and $\dist(f^n(x),f^n(y))\leq \epsilon$ for all $n\in\Z$. 
 Since $f$ is separating we know that 
 $y=f^k(x)$ with $k\neq 0$. 
 Let $B=\{f^{nk}(x):n\in\Z\}$. 
 We know that $B\subset [x]_\epsilon$. 
 Moreover, $f^n(B)\subset [f^n(x)]_\epsilon$. 
 Consequently, $\diam(f^n(B))\leq\expc$ for all $n\in\Z$. 
 As we proved, $B$ is a finite subset. 
 This proves that $x$ is periodic.
\end{proof}

The following question remains open:
is Theorem \ref{teoSepFin} true if the space is not totally disconnected?
We obtain the following simplified table for totally disconnected spaces.
Note that every homeomorphism of a totally disconnected space is cw-expansive.

\begin{table}[h]
\[
\begin{array}{ccccc}
\text{exp} & \rightarrow & \text{separating} \\
\downarrow && \downarrow\\
N\text{-exp} & \rightarrow & \text{finite exp} 
& \rightarrow &  \text{countably exp}
\end{array}
\]
\caption{Hierarchy on totally disconnected spaces.}
\label{tablaExpTotDisc}
\end{table}

The next example (based on \cite{ArKinExp}*{Example 2.24}) gives a separating homeomorphism that is not $N$-expansive.
Examples of $N$-expansive homeomorphisms (of compact surfaces) not being separating can be found in 
\cite{ArRobNexp}.

\begin{exa}
\label{sepnoexp}
For each positive integer $n$ consider a subset $A_n\subset \R^+$ with $n$ elements 
such that $A_n\cap A_m=\emptyset$ if $m\neq n$ and $A_n\to\{0\}$ in the Hausdorff metric.
Suppose that $A_n=\{a_{n,i}:i\in \Z_n\}$, where $\Z_n=\Z/n\Z$ is the cyclic group with $n$ elements.
Let $X$ be the subset of the sphere $\R^2\cup\{\infty\}$ given by
$$
  X=\{\infty\}\cup(\Z\times\{0\})\cup
  \bigcup_{n\in\Z^+}([-n,n]\times A_n),
$$ 
where $[-n,n]$ denotes the interval of integers between $-n$ and $n$.
Define $f\colon X\to X$ as
$f(\infty)=\infty$, $f(n,0)=(n+1,0)$, 
$f(j,a_{n,i})=(j+1,a_{n,i})$ if $-n\leq j<n$ and 
$f(n,a_{n,i})=(-n,a_{n,i+1})$.
Recall that $i\in\Z_n$.
The homeomorphism $f$ is not $N$-expansive because 
for all $\epsilon>0$ there is $n\geq N$ such that
$\diam(f^k(\{0\}\times A_n))<\epsilon$, and $\{0\}\times A_n$ contains $n\geq N$ points.
It is a separating homeomorphism because these are the only points contradicting expansiveness and they are in the same (periodic) orbit.
\end{exa}

\begin{rmk}
\label{rmkPowers}
For a homeomorphism $f\colon X\to X$ of a compact metric space 
it holds that $f$ is expansive if and only if $f^n$ is expansive for all $n\neq 0$ (see \cite{Utz}).
It is easy to see that if $f^n$ is separating for some $n\neq 0$ then $f$ is separating. 
We remark that the converse is not true. 
The homeomorphism $f$ of Example \ref{sepnoexp} is separating but 
its powers are not. 
Given $k\geq 2$ we will show that $f^k$ is not separating. 
We continue with the notation of the example.
For a large integer $m$ consider $x=(0,a_{km,1})$ and $y=(0,a_{km,2})$. 
Note that $x,y$ have period $km(2km+1)$ by $f$ 
and $y=f^{2km+1}(x)$. 
As $k$ is not a divisor of $2mk+1$ we have that $x$ and $y$ are in different (periodic) orbits of $f^k$. 
Finally, given $\epsilon>0$ take $m$ sufficiently large so that $\dist(f^l(x),f^l(y))\leq\epsilon$ for all $l\in\Z$. Then, $f^k$ is not separating 
if $|k|\geq 2$.
\end{rmk}

Now we derive some consequences of Theorem \ref{teoSepFin}. 
We say that $f$ is \emph{minimal} if it contains no proper closed invariant subsets 
(equivalently, if every orbit is dense in $X$).
If $Y\subset X$ is a closed invariant subset and $f$ restricted to $Y$ is minimal we say that $Y$ is a minimal subset.

 \begin{cor}
 \label{corMin}
  Every minimal and separating homeomorphism of a compact metric space is expansive.
 \end{cor}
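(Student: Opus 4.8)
The plan is to reduce the statement to the criterion of Wine quoted above and to exploit minimality through the dichotomy ``bounded versus unbounded return time.'' First I would dispose of the trivial case: if $X$ is finite then $f$ is automatically expansive, so assume $X$ is infinite. Since $f$ is minimal and $X$ is infinite, $f$ has no periodic points (a periodic orbit would be a finite proper closed invariant set), and moreover $X$ is perfect. Let $\expc$ be a separating constant, so that $\Gamma_\expc(x)\subset O(x)$ for every $x$ by Lemma \ref{lemFinExpGamma}. Arguing by contradiction, suppose $f$ is not expansive. By the criterion of Wine, for each $j$ there are $x_j\in X$ and $n_j\neq 0$ with $f^{n_j}(x_j)\neq x_j$ and $\dist(f^{r+n_j}(x_j),f^r(x_j))\leq\eta_j$ for all $r\in\Z$, where $\eta_j\to 0$; equivalently, $f^{n_j}(x_j)\in\Gamma_{\eta_j}(x_j)$.

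Next I would analyze the integers $n_j$. If, along a subsequence, $n_j$ is constant equal to some $n\neq 0$, then choosing $x_j\to x_*$ by compactness and passing to the limit in $\dist(f^{r+n}(x_j),f^r(x_j))\leq\eta_j$ gives $f^{r+n}(x_*)=f^r(x_*)$ for every $r$, so $x_*$ is periodic, contradicting the absence of periodic points. Hence $|n_j|\to\infty$. In this remaining case I would use minimality in the form ``every orbit is dense'': since $\dist(w,f^{n_j}(w))\leq\eta_j$ holds for every $w$ in the dense orbit of $x_j$, continuity of $w\mapsto\dist(w,f^{n_j}(w))$ upgrades it to $\sup_{w\in X}\dist(w,f^{n_j}(w))\leq\eta_j$. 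Thus $f^{n_j}\to\id$ uniformly with $|n_j|\to\infty$; that is, $f$ is uniformly rigid.

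The main obstacle is to derive a contradiction from ``$f$ minimal, separating, uniformly rigid, and $X$ infinite.'' The idea is that uniform rigidity is incompatible with any genuine expansive-type separation on an infinite space: the plan is to combine the relation $f^{n_j}\to\id$ with the perfectness of $X$ to approximate a transverse point $y\notin O(x)$ by orbit points whose forward and backward iterates remain $\expc$-close to those of $x$, thereby producing $y\in\Gamma_\expc(x)\setminus O(x)$ and violating that $\expc$ is a separating constant. Closing this last step cleanly---showing that a separating homeomorphism of an infinite compact space cannot be uniformly rigid---is the delicate part. An alternative, possibly cleaner, route is to prove first that a minimal separating homeomorphism is supported on a totally disconnected space and then to invoke Theorem \ref{teoSepFin}: since there are no periodic points, it yields $\Gamma_\epsilon(x)=\{x\}$ for all $x$, i.e. expansivity. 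In either route the essential difficulty is the same, namely ruling out the nontrivial connectedness or rigidity that arbitrarily long almost-returns could create.
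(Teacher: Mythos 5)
Your reduction is correct as far as it goes, and it is genuinely different in spirit from the paper's argument: Wine's criterion, the dichotomy on the integers $n_j$, the limit argument in the bounded case (using that minimality on an infinite space forbids periodic points), and the upgrade of $\dist(f^{r+n_j}(x_j),f^r(x_j))\leq\eta_j$ along the dense orbit of $x_j$ to $\sup_{w\in X}\dist(w,f^{n_j}(w))\leq\eta_j$ are all valid. But the proposal stops exactly where the content of the corollary lies: you must show that a separating homeomorphism of an infinite minimal compact space cannot be uniformly rigid, and you explicitly leave this open. This is a genuine gap, not a loose end. Note that what you have reduced to is precisely the separating case of the statement ``separating $+$ recurrent $\Rightarrow X$ finite,'' which is Theorem \ref{teosepRecFin} of the paper; you could not quote that theorem here even if you knew it, because its proof in the separating case invokes Corollary \ref{corSepAsymp}, which is itself proved using Corollary \ref{corMin} --- the statement at hand. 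Moreover, your sketched idea for closing the gap (use perfectness to approximate some $y\notin O(x)$ by orbit points and conclude $y\in\Gamma_\expc(x)\setminus O(x)$) cannot work as stated: rigidity along the sequence $n_j$ gives no control over the iterates $f^n$ for $n$ between the $n_j$'s, and minimality plus uniform rigidity on an infinite space is perfectly consistent (an irrational rotation of the circle is minimal and uniformly rigid), so any contradiction must use the separating hypothesis quantitatively, through some equicontinuity-type control that uniform rigidity alone does not provide.

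The missing ingredient --- and the route the paper takes --- is total disconnectedness. Since separating implies cw-expansive, Kato's theorem (\cite{Ka93}, Theorem 5.2) shows that the phase space of a minimal cw-expansive homeomorphism is totally disconnected; then Theorem \ref{teoSepFin} provides $\epsilon>0$ such that $\Gamma_\epsilon(x)\neq\{x\}$ only at periodic points, and since there are no periodic points, $\Gamma_\epsilon(x)=\{x\}$ for every $x$, i.e.\ $f$ is expansive; no rigidity analysis is needed. Alternatively, to finish along your route you would need two further steps: (i) rule out nontrivial continua, e.g.\ by Kato's Proposition 2.5 a cw-expansive homeomorphism on a non--totally disconnected space admits a continuum $C$ with $\diam(f^n(C))\to 0$ as $n\to+\infty$ or $n\to-\infty$, which contradicts $f^{\pm n_j}\to\id$ uniformly; and (ii) in the totally disconnected case, run the $\epsilon$-related class and cyclic decomposition argument that appears in the first part of the proof of Theorem \ref{teosepRecFin} to contradict separation. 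You mention the totally disconnected route only as an ``alternative'' and supply no proof of total disconnectedness, but that is exactly the step carrying the weight of the theorem.
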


 \begin{proof}
As $f$ is separating, it is cw-expansive and we can apply \cite{Ka93}*{Theorem 5.2} to conclude that $X$ is totally disconnected. 
If $X$ is finite there is nothing to prove. If $\card(X)=\infty$ 
then there are not periodic orbits (because $f$ is minimal). 
Applying Theorem \ref{teoSepFin} we see that $f$ is expansive.
\end{proof}

We say that two points $x\neq y$ are \emph{positively asymptotic} 
if $\dist(f^n(x),f^n(y))\to 0$ as $n\to +\infty$. 

\begin{rmk}
\label{rmkTodosPer}
 If $f$ is a separating homeomorphism of a compact metric space and every point is periodic then 
 $X$ is a finite set. 
 This follows by the arguments in the proof of \cite{Utz}*{Theorem 2.4}.
\end{rmk}

For the following proof we recall that the $\omega$-\emph{limit} set of $x$ 
is the set $\omega(x)$ of points $y\in X$ for which there is $n_k\to+\infty$ such that 
$f^{n_k}(x)\to y$ as $k\to+\infty$.


\begin{cor}
\label{corSepAsymp}
 If $X$ is a compact metric space with $\card(X)=\infty$ 
 and $f$ is a separating homeomorphism of $X$ then 
 there are positively asymptotic points.
\end{cor}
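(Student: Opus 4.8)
The plan is to produce two distinct points whose forward orbits stay within a separating constant $\expc$ of each other, and then to force that forward distance down to $0$. The engine for the second step is the following contraction principle, valid whenever $g$ is expansive with expansivity constant $e$ on a compact invariant set: if $a\neq b$ and $\dist(g^n(a),g^n(b))\le e$ for all $n\ge 0$, then $(a,b)$ is positively asymptotic. Indeed, if not, one picks $n_k\to+\infty$ with $\dist(g^{n_k}(a),g^{n_k}(b))$ bounded away from $0$ and passes to limits $g^{n_k}(a)\to u$, $g^{n_k}(b)\to v$; then $\dist(g^m(u),g^m(v))\le e$ for every $m\in\Z$ (for fixed $m$ the index $n_k+m$ is eventually nonnegative), so expansivity forces $u=v$, contradicting $u\neq v$.

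First I would take a minimal set $M\subseteq X$ and recall that, by Corollary \ref{corMin}, $f|_M$ is expansive, say with constant $e$. If $M$ is infinite, then $f|_M$ cannot be positively expansive, since a positively expansive homeomorphism of a compact metric space has finite domain; hence for the constant $e$ there exist $x\neq y$ in $M$ with $\dist(f^n(x),f^n(y))\le e$ for all $n\ge 0$. The contraction principle then makes $(x,y)$ positively asymptotic, and we are done.

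It remains to treat the case in which every minimal set of $X$ is a periodic orbit. Since $\card(X)=\infty$ and $f$ is separating, Remark \ref{rmkTodosPer} furnishes a non-periodic point $p$, and I would analyse $\omega(p)$. When $\omega(p)$ is finite it is a single periodic orbit $\{z_0,\dots,z_{q-1}\}$, and a shadowing argument concludes: for large $n$ the point $f^n(p)$ lies close to exactly one $z_{i(n)}$, continuity forces $i(n+1)=i(n)+1 \pmod q$, and therefore $\dist(f^n(p),f^n(z^*))\to 0$ for a suitable phase $z^*$ of the orbit, so $(p,z^*)$ is positively asymptotic.

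The hard part is the surviving subcase: all minimal sets finite, yet every non-periodic point has an infinite $\omega$-limit set. Here one cannot simply pass to a power $f^q$ and use a fixed point, because by Remark \ref{rmkPowers} powers of a separating homeomorphism need not be separating. My plan is to localise to the infinite closed invariant set $\omega(p)$, on which $f$ is still separating, and to iterate the dichotomy above: either $\omega(p)$ contains an infinite minimal set, returning us to the expansive case, or it provides a non-periodic point with a strictly smaller, eventually finite, $\omega$-limit, returning us to the shadowing case. Showing that this descent terminates, equivalently that an infinite separating system whose minimal sets are all finite must contain an orbit genuinely converging to one of them, is the crux on which the argument turns.
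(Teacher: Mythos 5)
Your contraction principle, your infinite-minimal-set case, and your case ``$\omega(p)$ finite'' are all correct, and the first two coincide with the paper's own argument (Corollary \ref{corMin} plus the Coven--Keane theorem \cite{CK}; your contraction principle is exactly the step the paper compresses into ``consequently''). The genuine gap is the subcase you yourself flag as the crux: all minimal sets finite but $\omega(p)$ infinite. The descent you propose cannot be closed, because it need not even make progress: a non-periodic point $p_2\in\omega(p)$ may satisfy $\omega(p_2)=\omega(p)$ (this happens whenever $\omega(p)$ is an infinite transitive set and $p_2$ has a dense orbit in it, and a non-periodic point of $\omega(p)$, which is all that Remark \ref{rmkTodosPer} gives you, may well be such a point). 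Even when the inclusions $\omega(p_2)\subsetneq\omega(p)$ are strict, chains of closed invariant sets can descend transfinitely, so there is no induction to run. The proposal is therefore incomplete precisely where you say it is.

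The idea you are missing is that you never need $\omega(p)$ to \emph{be} a periodic orbit, only to \emph{contain} a periodic point $z$ --- which it automatically does in this subcase, since $\omega(p)$ contains a minimal set and all minimal sets are assumed finite; this is exactly how the paper phrases its first case. One then upgrades your shadowing argument from convergence to mere accumulation, using the separating property: fix $\delta>0$ smaller than the separating constant and than the mutual distances of the (finitely many) points of $O(z)$; for each approach time $n_k$ (where $f^{n_k}(p)\to z$) let $S_k<n_k$ be the last time the orbit of $p$ is $\delta$-far from $O(z)$ (if such times eventually fail to exist, the forward orbit of $p$ is trapped in the $\delta$-neighborhood and you argue the same way with an iterate of $p$ in place of $v$). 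Then $n_k-S_k\to\infty$, and any limit point $v$ of $\bigl(f^{S_k+1}(p)\bigr)_k$ has its whole forward orbit in the closed $\delta$-neighborhood of $O(z)$ while $v\notin O(z)$. Phase coherence (uniform continuity plus the choice of $\delta$) yields a single $z^*\in O(z)$ with $\dist(f^n(v),f^n(z^*))\le\delta$ for all $n\ge 0$, and now your own limiting argument runs with separation in place of expansivity: a subsequence with $\dist(f^{n_k}(v),f^{n_k}(z^*))\ge c>0$ would produce a point $u$ and a phase $z''\in O(z)$ with $u\ne z''$ and $\dist(f^m(u),f^m(z''))\le\delta$ for all $m\in\Z$, whence $u\in O(z)$ by the separating property --- impossible, since distinct points of $O(z)$ are more than $\delta$ apart. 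So $(v,z^*)$ is positively asymptotic, no descent is needed, and this one-step lemma is the content behind the paper's brief assertion that accumulation of the orbit on a periodic point already produces a positively asymptotic pair.
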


\begin{proof}
By Remark \ref{rmkTodosPer} there is a point $x\in X$ that is not periodic. 
If $\omega(x)$ contains a periodic point $p$ then, as the positive orbit of $x$ acumulates on $p$, 
there is a point that is positively asymptotic to $p$.
Assume that $Y\subset \omega(x)$ is a minimal subset with infinitely many points.
By Corollary \ref{corMin} we know that $f$ is expansive on $Y$. 
By \cite{CK} we know that $f$ is not positively expansive in $Y$, and consequently, 
there are positively asymptotic points in $Y$. 
\end{proof}

A cw-expansive homeomorphism may not have positively asymptotic points if the space is totally disconnected. 
For example, the identity of a Cantor set is (trivially) cw-expansive but it has not asymptotic points. 
What can be said with respect to the existence of positively asymptotic points if $X$ is connected and $f$ is cw-expansive?
If $X$ is a non-trivial Peano continuum and $f$ is cw-expansive then there are asymptotic points, see \cite{Ka93}.
\section{Recurrence and expansive groups}

A homeomorphism $f\colon X\to X$ is 
\emph{recurrent} if for all $\epsilon>0$ there is 
$n\in\Z$, $n\neq 0$, such that $\dist(x,f^n(x))<\epsilon$ for all $x\in X$.

\begin{rmk}
In \cite{Br}*{Theorem 2}, Bryant proved that if $f$ is an expansive homeomorphism of a compact metric space 
and $f$ is recurrent then $X$ is a finite set. 
Bryant's proof is as follows. 
Suppose that for some $n\neq 0$ we have that 
$\dist(x,f^n(x))<\expc$ for all $x\in X$, where $\expc$ is an expansivity constant of $f$. 
Then, $$\dist(f^k(x),f^k(f^n(x)))=\dist(f^k(x),f^n(f^k(x)))<\expc$$ for all $k\in\Z$. 
The expansivity of $f$ implies that $x=f^n(x)$ for all $x\in X$. 
Then $X$ is finite. 
\end{rmk}

It is clear that this argument does not work if 
$f$ is finite expansive or separating instead of expansive. 
However, with different techniques, we will generalize Bryant's result for finite expansive and separating homeomorphisms.

\begin{thm}
\label{teosepRecFin}
Let $f$ be a recurrent homeomorphism of a compact metric space.
If $f$ is finite expansive or separating then $X$ is a finite set. 
\end{thm}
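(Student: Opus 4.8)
The plan is to use recurrence to inject orbit points into the sets $\Gamma_\epsilon(\cdot)$, and then to exploit finite expansivity (resp. separation) to force first periodicity and finally a uniform bound on periods. Fix a constant $\epsilon$ witnessing finite expansivity (resp. separation). The basic device is a bridge between recurrence and $\Gamma_\epsilon$: if $n\neq 0$ satisfies $\dist(z,f^n(z))<\epsilon$ for all $z$, then for every $x$ and every $k\in\Z$ one has $\dist(f^k(x),f^k(f^n(x)))=\dist(f^k(x),f^n(f^k(x)))<\epsilon$, so $f^n(x)\in\Gamma_\epsilon(x)$. I then split according to recurrence times: the sets $N(\delta)=\{n\neq 0:\dist(z,f^n(z))<\delta\ \forall z\}$ are nonempty and nested, so either $\bigcap_{\delta>0}N(\delta)\neq\emptyset$, giving some $n\neq0$ with $f^n=\id$, or one can choose distinct $n_j\in N(\delta_j)$ with $\delta_j\to 0$, so that $f^{n_j}\to\id$ uniformly, $f^{n_j}\neq\id$, and $f^{n_j}(x)\in\Gamma_\epsilon(x)$ for every $x$ and large $j$. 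The case $f^N=\id$ already yields the conclusion: the formula $D(x,y)=\max_{0\le k<|N|}\dist(f^k(x),f^k(y))$ defines a compatible metric for which $\Gamma_\epsilon(x)$ is exactly the closed $D$-ball of radius $\epsilon$; finite expansivity (resp. separation, where this ball sits inside the finite set $O(x)$) makes every such ball finite, and covering the compact space by finitely many $D$-balls of radius $\epsilon/2$ forces $X$ to be finite.

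So assume $f^{n_j}\to\id$ uniformly with the $n_j$ distinct. In the finite expansive case I first obtain periodicity of every point: since $\Gamma_\epsilon(x)$ is finite and $f^{n_j}(x)\in\Gamma_\epsilon(x)$ with $f^{n_j}(x)\to x$, the point $x$ is isolated in $\Gamma_\epsilon(x)$ by Lemma \ref{lemFinExpGamma}, hence $f^{n_j}(x)=x$ for all large $j$; thus $x$ is periodic and its period eventually divides $n_j$. The remaining task is to bound the periods uniformly, reducing to the settled case $f^N=\id$. Here recurrence must be used a second time: for a fixed recurrence time $n$ with $\dist(z,f^n(z))<\delta$ and any $w$, the points $f^{mn}(w)$ satisfy $\dist(f^k(w),f^k(f^{mn}(w)))\le m\delta$ for all $k$, so $f^{mn}(w)\in\Gamma_\epsilon(w)$ whenever $m\delta\le\epsilon$; if $w$ has large period these are many distinct points of $\Gamma_\epsilon(w)$. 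Letting the period grow and passing to a limit point $w_*$ of such $w$, the upper semicontinuity of $\Gamma_\epsilon$ (limits of points of $\Gamma_\epsilon(w_i)$ lie in $\Gamma_\epsilon(w_*)$) should deposit arbitrarily many points into a single $\Gamma_\epsilon(w_*)$, contradicting finiteness and thereby bounding the periods.

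For the separating case I argue through minimal sets. Any minimal set $M\subset X$ inherits both separation and recurrence, so by Corollary \ref{corMin} the restriction $f|_M$ is expansive, and then by Bryant's theorem (the Remark preceding the statement) a recurrent expansive homeomorphism has finite domain; hence every minimal set is a single periodic orbit. Using that recurrence reduces every orbit closure to such minimal pieces, one concludes that every point of $X$ is periodic, and Remark \ref{rmkTodosPer} then gives that $X$ is finite.

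I expect the real difficulty to sit in two twin places: the uniform period bound in the finite expansive case, and the passage from ``every minimal set is a periodic orbit'' to ``every point is periodic'' in the separating case. The delicate issue in the first is that the many $\Gamma_\epsilon$-related points produced along a long orbit may all collapse to one limit, so the deposit into a single $\Gamma_\epsilon(w_*)$ must be carried out keeping the points separated, for instance by controlling the limit point's own period through a diagonal choice. A uniform way to settle both points would be to prove that a recurrent homeomorphism is equicontinuous, so that the orbit pseudometric $\rho(x,y)=\sup_{n}\dist(f^n(x),f^n(y))$ becomes a compatible $f$-invariant metric; then $\Gamma_\epsilon(x)$ is a $\rho$-ball and the covering argument above finishes directly, while every orbit closure is minimal, hence a finite periodic orbit. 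Establishing this equicontinuity from recurrence is, I believe, the true crux.
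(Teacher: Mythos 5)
Your opening reductions are sound: the dichotomy between ``some $f^N=\id$'' and ``$f^{n_j}\to\id$ uniformly with $f^{n_j}\neq\id$'', the covering argument that settles the case $f^N=\id$, and the conclusion that in the finite expansive branch every point is periodic (since $f^{n_j}(x)\in\Gamma_\epsilon(x)$, a finite set, and $f^{n_j}(x)\to x$) are all correct. But the two ``twin places'' you flag are genuine gaps, not loose ends. In the finite expansive branch, the points $f^{mn}(w)$ need not be distinct no matter how large the period of $w$ is (if the period divides $n$ they all coincide); this sub-issue is repairable --- if every point had period at most $M$ under $f^n$ then $f^{nM!}=\id$, against the case hypothesis, so some $w$ yields $M+1$ distinct points of $\Gamma_\epsilon(w)$ --- but what this produces is only $\sup_x \card(\Gamma_\epsilon(x))=\infty$, i.e.\ failure of $N$-expansivity for every $N$, which is strictly weaker than failure of finite expansivity. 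Converting unbounded cardinalities into a single infinite $\Gamma_\epsilon(w_*)$ is exactly where the collapse you describe can happen (Hausdorff limits of finite sets of growing cardinality can be a single point), and your sketch offers no mechanism to prevent it. In the separating branch, the step from ``every minimal set is a periodic orbit'' to ``every point is periodic'' is unjustified: recurrence does give that every point is pointwise recurrent, but a recurrent point's orbit closure need not be minimal, so a non-periodic point whose $\omega$-limit set contains a periodic orbit is not excluded by anything you say.

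The unifying fix you propose is false: recurrence in this paper's sense is uniform rigidity, and Glasner and Maon constructed weakly mixing (hence badly non-equicontinuous) uniformly rigid homeomorphisms of compact metric spaces, so ``recurrent $\Rightarrow$ equicontinuous'' cannot be the crux. The paper closes the two gaps by other means, and it is instructive to compare. For the separating case it is a two-line argument: by Corollary \ref{corSepAsymp} an infinite space carrying a separating homeomorphism has positively asymptotic points, whereas a recurrent homeomorphism has none (recurrence times can be taken positive and tending to $+\infty$ unless some power is the identity, and then $\dist(x,y)\leq 2\epsilon_j+\dist(f^{n_j}(x),f^{n_j}(y))\to 0$ for asymptotic $x,y$). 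For the finite expansive case the paper argues topologically rather than by counting: cw-expansivity plus Kato's results force $X$ to be totally disconnected (a nontrivial continuum $C$ with $\diam(f^n(C))\to 0$ would give asymptotic points, contradicting recurrence); the $\epsilon$-related classes rule out infinite minimal sets; finite expansivity makes periodic orbits dynamically isolated, so the absence of asymptotic points shows periodic points are isolated points of $X$; hence every point is periodic, $X$ has no accumulation points, and $X$ is finite. Your periodicity step would fit into that scheme, but the finishing arguments you would need are these asymptotic-point and isolation arguments, not a cardinality bound on $\Gamma_\epsilon$.
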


\begin{proof}
First we assume that $f$ is recurrent and finite expansive.
Denote by $\expc$ a finite expansivity constant.
 Let us argue by contradiction and assume that $X$ is not a finite set. 
As $f$ is finite expansive, we know that $f$ is cw-expansive.
 If $X$ contains a non-trivial connected subset, 
 by \cite{Ka93}*{Proposition 2.5} 
 there is a non-trivial continuum (a compact connected subset) $C\subset X$ such that $\diam(f^n(C))\to 0$ 
 as $n\to+\infty$ or $n\to-\infty$. This easily contradicts that $f$ is recurrent. 
 
 Now assume that $X$ is totally disconnected. 
 Suppose that $f$ is minimal. 
 Since $X$ is totally disconnected there is $\epsilon>0$ such that $\diam([x]_e)<\expc$ for all 
 $x\in X$.
 As $f$ is recurrent there is $m\in \Z$, $m\neq 0$, such that $\dist(x,f^m(x))<\epsilon$ for all $x\in X$.
 Then, $x$ and $f^{km}(x)$ are $\epsilon$-related for all $k\in\Z$ and $f^m\colon X\to X$ cannot be minimal. 
 Given that $f$ is minimal, $X$ can be decomposed as a disjoint union $X=\cup_{i=1}^mX_i$ such that 
 $f(X_i)=X_{i+1}$ (cyclically) and $f^m\colon X_i\to X_i$ is minimal. 
 Since $x$ and $f^{km}(x)$ are $\epsilon$-related for all $k\in\Z$ and for all $x\in X$, 
 we have that if $x\in X_i$ then $X_i\subset [x]_\epsilon$. 
 Then, $f$ cannot be separating because $\diam([x]_\epsilon)<\expc$ for all $x\in X$. 
 Therefore $f$ is not minimal. 
 
 The previous argument implies that every minimal subset of $X$ must be a periodic orbit. 
 Since $f$ is finite expansive we have that periodic orbits are dynamically isolated, that is, there is 
 $\epsilon>0$ such that if $\dist(x,y)<\epsilon$ and $x$ is a periodic point then there is 
 $n\in\Z$ such that $\dist(f^n(x),f^n(y))>\epsilon$. If a periodic point $x$ is an accumulation 
 point of $X$ then there is $y\neq x$ such that $\dist(f^k(x),f^k(y))\to 0$ as 
 $k\to+\infty$ or $k\to-\infty$. This contradicts that $f$ is recurrent. 
 Therefore, no periodic point is an accumulation point of $X$. 
 Since every $\omega$-limit set contains a minimal set, every point is periodic and $X$ has no accumulation points. 
 Then $X$ is a finite set.

Now assume that $f$ is separating and recurrent. 
If $\card(X)=\infty$ then we can apply Corollary \ref{corSepAsymp} 
to obtain two asymptotic points.
As recurrent homeomorphisms cannot have asymptotic points we arrive to a contradiction that proves that $X$ is finite.
 \end{proof}

Let $\homeos(X)$ denote the group of homeomorphisms of $X$. 
On $\homeos(X)$ consider the norm 
$$|f|=\sup_{x\in X}\dist(x,f(x)).$$ 
A subgroup $G\subset\homeos(X)$ is an \emph{expansive group} 
if for all $\epsilon>0$ there is $\delta>0$ such that 
if $\dist(f(x),f(y))<\expc$ for all $f\in G$ then 
there is $g\in G$ such that $y=g(x)$ and $|g|<\epsilon$.
This definition is related with the definition of 
kinematic expansive flow given in the introduction.

For $f\in\homeos(X)$ define the cyclic group 
$\gen f=\{f^n:n\in\Z\}$.
The key for the next result is to prove that if $f$ is expansive then $\gen f$ is discrete. 
Note that there are (non-expansive) cyclic groups that are not discrete, for example consider an irrational rotation of the circle.

\begin{cor}
\label{corCyc}
A homeomorphism $f$ of a compact metric space 
is expansive if and only if 
$\gen f\subset\homeos(X)$ is an expansive group. 
\end{cor}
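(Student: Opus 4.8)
The plan is to prove the two implications separately, using discreteness of $\gen f$ in $(\homeos(X),|\cdot|)$ as the conceptual hinge in both directions. Throughout I read the defining inequality of an expansive group as $\dist(g(x),g(y))<\delta$ (the $\delta$ furnished against the given $\epsilon$), so that the property asserts: orbits that stay $\delta$-close are related by a group element of norm $<\epsilon$.

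For the forward implication, suppose $f$ is expansive with constant $\expc$. First I record the discreteness observation emphasized before the statement: if $|f^m|<\expc$, then for every $x$, setting $y=f^m(x)$ gives $\dist(f^k(x),f^k(y))=\dist(f^k(x),f^m(f^k(x)))\leq|f^m|<\expc$ for all $k\in\Z$, so expansivity forces $x=y$ and hence $f^m=\id$. The group property is then immediate: given $\epsilon>0$, take $\delta=\expc$; if $\dist(f^n(x),f^n(y))<\delta$ for all $n\in\Z$ then $x=y$ by expansivity, and $g=\id=f^0\in\gen f$ witnesses $y=g(x)$ with $|g|=0<\epsilon$. The point is that $\id$ does all the work precisely because expansive orbits cannot remain close.

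For the converse I assume $\gen f$ is an expansive group and proceed in three steps. First, $f$ is \emph{separating}: applying the group property with $\epsilon=1$ yields $\delta>0$ such that $\dist(f^n(x),f^n(y))\leq\delta/2<\delta$ for all $n$ forces $y=f^m(x)\in O(x)$, so $\delta/2$ is a separating constant. Second, and this is the crux, $\gen f$ is discrete. I argue by contradiction: were it not, there would be $m_k\neq 0$ with $|f^{m_k}|\to 0$; but then $\dist(x,f^{m_k}(x))\leq|f^{m_k}|\to 0$ uniformly in $x$, which is exactly the assertion that $f$ is \emph{recurrent}. Since $f$ is also separating, Theorem \ref{teosepRecFin} forces $X$ to be finite, so $\homeos(X)$ and hence $\gen f$ are finite and therefore discrete, contradicting non-discreteness. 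Thus there is $\rho>0$ with $|f^m|<\rho\Rightarrow f^m=\id$. Third, I combine the two: applying the group property with $\epsilon=\rho/2$ produces $\delta>0$ such that $\dist(f^n(x),f^n(y))<\delta$ for all $n$ yields $y=f^m(x)$ with $|f^m|<\rho/2<\rho$, hence $f^m=\id$ and $y=x$; so $\delta/2$ is an expansivity constant and $f$ is expansive.

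The step I expect to be the main obstacle is the discreteness of $\gen f$ in the converse. The delicate feature is that the \emph{global} norm bound $|g|<\epsilon$ supplied by the group property is what allows discreteness to interact with expansivity at all: a merely orbitwise smallness of $f^m$ along a single orbit would not be controlled by the global quantity $|f^m|$, and so separating alone would not upgrade to expansive. Recognizing that the failure of discreteness is literally recurrence, which lets the already-established Theorem \ref{teosepRecFin} collapse the space to a finite set, is the key maneuver; after that, everything reduces to the routine choices $\delta=\expc$ (forward) and $\epsilon=\rho/2$ (converse) that force $y=x$.
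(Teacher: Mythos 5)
Your proof is correct and takes essentially the same route as the paper's: both obtain that $f$ is separating from the group property, convert the relevant failure (non-expansiveness in the paper, non-discreteness of $\gen f$ in yours) into recurrence of $f$, and invoke Theorem \ref{teosepRecFin} to force $X$ to be finite and reach a contradiction. Your explicit detour through discreteness of $\gen f$ is precisely the maneuver the paper flags as ``the key'' in the remark preceding the corollary, and your argument additionally absorbs the finite case that the paper disposes of separately.
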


\begin{proof}
The direct part follows by the definitions.
In order to prove the converse notice that 
the case of $X$ being finite is trivial. 
So, we will assume that $\card(X)=\infty$.
Note that if $\gen{f}$ is an expansive group then $f$ is separating. 
If $f$ were not 
expansive then $f$ should be recurrent. 
Applying Theorem \ref{teosepRecFin} we would arrive to a contradiction. 
This proves that $f$ is expansive.
\end{proof}

\begin{bibdiv}
\begin{biblist}
%
%
%
\bib{ArKinExp}{article}{
author = {A. Artigue},
title = {Kinematic expansive flows},
journal = {Ergodic Theory and Dynamical Systems},
volume = {\azul{36}},
year = {2016},
pages = {390--421}}
%
\bib{ArRobNexp}{article}{
author={A. Artigue},
title={Robustly N-expansive surface diffeomorphisms},
volume={36},
year={2016},
journal={Discrete and Continuous Dynamical Systems},
pages={2367--2376}}
%
%
\bib{ArCa}{article}{
author={A. Artigue},
author={D. Carrasco-Olivera},
title={A note on measure-expansive diffeomorphisms},
journal={J. Math. Anal. Appl.},
volume={428},
year={2015},
pages={713--716}}

%
%
%
%
\bib{Br}{article}{
author={B.F. Bryant},
title={Expansive Self-Homeomorphisms of a Compact Metric Space},
journal={Amer. Math. Monthly},
volume={69},
year={1962},
pages={386--391}}

%
%
\bib{CarCor}{article}{
title={N-expansive homeomorphisms with the shadowing property},
author={B. Carvalho}, 
author={W. Cordeiro},
journal={Preprint arxiv},
year={2016}}
%
%
%
\bib{CK}{article}{
author={E. M. Coven},
author={M. Keane},
title={Every compact metric space that supports a positively expansive homeomorphism is finite},
year={2006},
volume={48},
pages={304--305},
journal={IMS Lecture Notes Monogr. Ser., Dynamics \& Stochastics}}
\bib{Ka93}{article}{
author={H. Kato},
title={Continuum-wise expansive homeomorphisms},
journal={Can. J. Math.},
volume={45},
number={3},
year={1993},
pages={576--598}}
\bib{Mo12}{article}{
author={C.A. Morales},
title={A generalization of expansivity},
journal={Disc. and Cont. Dyn. Sys.},
volume={32},
year={2012}, 
pages={293--301}}
\bib{MoSi}{book}{
author={C.A. Morales},
author={V.F. Sirvent},
title={Expansive measures},
publisher={29 Colóquio Brasileiro de Matemática},
year={2013}}
\bib{Utz}{article}{
author={W. R. Utz},
title={Unstable homeomorphisms},
journal={Proc. Amer. Math. Soc.},
year={1950},
volume={1},
number={6},
pages={769--774}}
%
%
%

\bib{Wine82}{article}{
author={J.D. Wine},
title={Extending expansive homeomorphisms},
journal={Proc. of the AMS},
volume={86},
year={1982},
pages={531--534}}

\bib{Wine85}{article}{
author={J.D. Wine},
title={A further result on extending expansive homeomorphisms},
journal={Proc. of the AMS},
volume={95},
year={1985},
pages={131--134}}


\end{biblist}
\end{bibdiv}
\noindent Departamento de Matemática y Estadística del Litoral, \\
Universidad de la República, Rivera 1350 Salto-Uruguay\\
E-mail: artigue@unorte.edu.uy
\end{document}